\theoremstyle{plain}
\newtheorem{theorem}{Theorem}[section]
\newtheorem{lemma}[theorem]{Lemma}
\theoremstyle{definition}
\newtheorem*{Acknowledgement}{Acknowledgment}
\begin{document}
\def\N{\mathbb{N}}
\def\Z{\mathbb{Z}}
\def\R{\mathbb{R}}
\def\C{\mathbb{C}}
\def\w{\omega}

\title[Characterizing Bounded Orthogonally Additive Polynomials]{
Characterizing Bounded Orthogonally Additive Polynomials on Vector Lattices}
\author{G. Buskes}
\address{Department of Mathematics, University of Mississippi, University,
MS 38677, USA}
\email{mmbuskes@olemiss.edu}
\author{C. Schwanke}
\address{Unit for BMI, North-West University, Private Bag X6001, Potchefstroom,
2520, South Africa}
\email{cmschwanke26@gmail.com}
\date{\today}
\subjclass[2010]{46A40}
\keywords{vector lattice, orthogonally additive polynomial, geometric mean, root mean power, complexification}

\begin{abstract}
We derive formulas for characterizing bounded orthogonally additive polynomials in two ways. Firstly, we prove that certain formulas for orthogonally additive
polynomials derived in \cite{Kusa} actually characterize them. Secondly, by employing
complexifications of the unique symmetric multilinear maps associated with orthogonally additive maps we derive new characterizing formulas.
\end{abstract}

\maketitle

\section{Introduction}\label{S:intro}

In \cite{Kusa}, Kusraeva uses the $n^{th}$ root mean power $\mathfrak{S}_{n}$ and the $n^{th}$ geometric mean $\mathfrak{G}_{n}$ in uniformly complete vector lattices (both defined by the Archimedean vector lattice functional calculus) to derive two interesting formulas in the theory of orthogonally additive homogeneous polynomials on Archimedean vector lattices. Indeed, Kusraeva proves that for a bounded orthogonally additive $n$-homogeneous polynomial $P$ on a uniformly complete Archimedean vector lattice $E$ with values in a convex bornological vector space $Y$ (with unique corresponding symmetric $n$-linear map $\check{P}$), the following equalities hold for all $r\in\mathbb{N}$ and all $f_{1},\dots,f_{\max\{n,r\}}\in E^+$:
\[
P(\mathfrak{S}_{n}(f_{1},\dots,f_{r}))=P(f_{1})+\dots+P(f_{r})
\]
and
\[
P(\mathfrak{G}_{n}(f_{1},\dots,f_{n}))=\check{P}(f_{1},\dots,f_{n}).
\]
In this note we show that these properties \textit{characterize} bounded orthogonally
additive polynomials. Additionally, by distinguishing between odd and even
polynomials, we provide additional characterizing formulae for bounded orthogonally
additive polynomials by using the complexification of the unique corresponding
symmetric multilinear maps. For the definition of $n^{th}$ root mean power and $n^{th}$
geometric mean via functional calculus in uniformly complete Archimedean vector lattices we refer to \cite{BusSch} and for the theory of functional calculus we refer to \cite{BusdPvR}. For further information on the theory of orthogonally additive polynomials on vector lattices we refer to \cite{BenLassLlav,BuBus,Loane2}.

We use this introduction to provide a couple of notational shortcuts that we
will use to facilitate our proofs. For the $n^{th}$ geometric mean $\mathfrak{G}_{n}$ on a uniformly complete Archimedean vector lattice $E$, a symmetric $n$-linear map $T$, and $f,g\in E$ we will write
\[
\mathfrak{G}_{n}(f^{n-k}g^{k})\text{,}%
\]
respectively%
\[
T(f^{n-k}g^{k})\text{,}%
\]
as a shorthand for the geometric mean of $n$ elements of $E$ of which $n-k$ are
equal to $f$ and $k$ are equal to $g$, respectively the image of $T$ when
$n-k$ entries are equal to $f$ and the other entries are equal to $g$. In particular, the expressions $\mathfrak{G}_n(f^n)$ and $T(f^n)$ are shorthands for the geometric mean, respectively the image of $T$, when all $n$ variables are equal to $f$.

Moreover, if $s,t,n\in\N$ satisfy $s+t=n-1$ and $T$ is a symmetric $n$-linear map on a uniformly complete vector lattice $E$ then for $f,g,z\in E$ we will write
\[
T(f^sg^tz)\text{,}%
\]
as a shorthand for the image of $T$ when $s$ entries are equal to $f$ and $t$ entries are equal to $g$ and one entry is equal to $z$. The shorthand for the geometric mean
\[
\mathfrak{G}_{n}(f^sg^tz)\text{,}%
\]
is used analogously.

Additionally, it will help the reader to keep in mind that, if $E$ equals the real numbers and $r\in\N$, we have for $x_1,\dots,x_{\max\{n,r\}}\in\R^+$ that
\[
\mathfrak{S}_{n}(x_{1},\dots,x_{r})=\left(
{\textstyle\sum\limits_{k=1}^{r}}
\left\vert x_{k}\right\vert ^{n}\right)  ^{\frac{1}{n}}
\]
and
\[
\mathfrak{G}_{n}(x_{1},\dots,x_{n})=\left(
{\textstyle\prod\limits_{k=1}^{n}}
\left\vert x_{k}\right\vert \right)^{\frac{1}{n}}.%
\]
Note that for a scalar $\alpha\in\R^+$ we have%
\[
\mathfrak{G}_{n}(\alpha
f^{n-k}g^{k})=\alpha^{1/n}\mathfrak{G}_{n}(f^{n-k}g^{k})%
\]
where $\mathfrak{G}_{n}(\alpha f^{k}g^{n-k})$ is a shorthand for $\mathfrak{G}_{n}%
(f^{k}g^{n-k})$ in which additionally the first entry is multiplied by
$\alpha$.

Given a uniformly complete Archimedean vector lattice $E$, we denote it's vector space complexification $E+iE$ by $E_\C$, for short. Since $E$ is uniformly complete, the expression
\[
|z|=\sup_{\theta\in[0,2\pi]}\{(\cos\theta)f+(\sin\theta)g\}
\]
is well defined for all $z=f+ig\in E_\C$ (with $f,g\in E$), and so $E_\C$ is an Archimedean complex vector lattice. As usual, for $f,g\in E$ and $z=f+ig\in E_\C$ we define the complex conjugate $\bar{z}=f-ig$.

In the proof of Theorem~\ref{T:even} we utilize the richer algebraic structure of complex vector lattices to obtain the converse of Kusraeva's theorem mentioned above. This requires us to extend symmetric multilinear maps defined on real vector lattices to corresponding symmetric multilinear maps defined on the vector space complexifications of these vector lattices. To this end, for a uniformly complete Archimedean real vector lattice $E$, a real vector space $V$, and a symmetric $n$-linear map $T\colon E\times\cdots\times E\to V$, we define the complex symmetric $n$-linear map $T_{\mathbb{C}}\colon E_\C\times\cdots\times E_\C\to V_\C$ by
\begin{equation*}
T_\C(f_{0}^{1}+if_{1}^{1},\dots,f_{0}^{n}+if_{1}^{n})=\sum\limits_{\epsilon_{k}\in\{0,1\}}i^{\sum\limits_{k=1}^{n}\epsilon_{k}}T(f_{\epsilon _{1}}^{1},\dots,f_{\epsilon_{n}}^{n})
\end{equation*}
for every $(f_{0}^{1}+if_{1}^{1},\dots,f_{0}^{n}+if_{1}^{n})\in E_\C\times\cdots\times E_\C$.

For more information on these complexifications of multilinear maps on vector lattices, we refer the reader to \cite{BusSch2}. For any unexplained terminology or basic results in vector lattice theory, we refer the reader to \cite{AB, LuxZan1, Zan2}.

\section{Main Results}\label{S:main}

The following two lemmas are needed for Theorem~\ref{T:even} and Theorem~\ref{T:odd}. We denote the algebraic dual of a vector space $V$ by $V^\#$.

\begin{lemma}\label{L:P=0}
Let $V$ be a real vector space. If $v_0,v_1,\dots,v_n\in V$ and $v_0+\lambda v_1+\dots+\lambda^nv_n=0$ for every $\lambda\in\R^+$ then $v_0=\dots=v_n=0$. 
\end{lemma}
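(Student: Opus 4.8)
The plan is to reduce to the classical fact that a nonzero polynomial of degree $\le n$ over $\mathbb{R}$ has only finitely many roots, by testing against linear functionals. First I would pick an arbitrary $\varphi\in V^\#$ and apply it to the hypothesis: for every $\lambda\in\mathbb{R}^+$ we get $\varphi(v_0)+\lambda\varphi(v_1)+\dots+\lambda^n\varphi(v_n)=0$. This is a real polynomial identity in $\lambda$ holding on the infinite set $\mathbb{R}^+$, hence all its coefficients vanish: $\varphi(v_0)=\varphi(v_1)=\dots=\varphi(v_n)=0$. Since this holds for every $\varphi\in V^\#$ and the algebraic dual separates points of a real vector space (a standard Hamel-basis argument: extend any nonzero vector to a basis and take the corresponding coordinate functional), we conclude $v_0=v_1=\dots=v_n=0$.

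Alternatively, and avoiding any appeal to duality, I would argue directly by substituting $n+1$ distinct positive values $\lambda_0,\lambda_1,\dots,\lambda_n$ of $\lambda$. This yields a system of $n+1$ vector equations whose coefficient matrix is the Vandermonde matrix $(\lambda_j^k)_{0\le j,k\le n}$, which is invertible because the $\lambda_j$ are pairwise distinct. Multiplying the system by the inverse (a matrix with real entries, acting coordinatewise on the $V$-valued unknowns) forces $v_0=v_1=\dots=v_n=0$. This second route is perhaps cleaner since it is entirely elementary and self-contained, using nothing beyond invertibility of Vandermonde matrices over $\mathbb{R}$.

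There is essentially no obstacle here: the only point requiring a word of care is that we are dealing with a polynomial identity with \emph{vector} coefficients rather than scalar ones, but either the scalarization via $V^\#$ or the Vandermonde inversion handles this immediately. I would present the Vandermonde argument as the main proof and perhaps remark that the hypothesis "for every $\lambda\in\mathbb{R}^+$" can be weakened to "for $n+1$ distinct values of $\lambda$", which is all that is used and which may be convenient in the applications to Theorem~\ref{T:even} and Theorem~\ref{T:odd}.
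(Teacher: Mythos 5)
Your proposal is correct, and it actually contains two valid arguments. Your first route (apply an arbitrary $\varphi\in V^\#$, note that a real polynomial vanishing on the infinite set $\R^+$ has all coefficients zero, then use that $V^\#$ separates points) is essentially the paper's proof; the only difference is cosmetic, in how the scalar case is settled --- the paper invokes vanishing of all higher-order right derivatives of $\lambda\mapsto v_0+\lambda v_1+\dots+\lambda^n v_n$, while you invoke the finiteness of the root set of a nonzero polynomial, which is if anything the more standard phrasing. Your second route, the Vandermonde argument, is genuinely different from the paper: evaluating at $n+1$ distinct positive values $\lambda_0,\dots,\lambda_n$ and inverting the Vandermonde matrix $(\lambda_j^k)$ handles the vector-valued coefficients directly, with no appeal to the algebraic dual or to separation of points, so it is entirely elementary and self-contained. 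It also makes explicit the quantitative refinement that the hypothesis ``for every $\lambda\in\R^+$'' can be weakened to ``for $n+1$ distinct values of $\lambda$,'' which the paper's formulation does not surface; nothing in the applications in Theorem~\ref{T:even} and Theorem~\ref{T:odd} requires more than this. Either write-up would be acceptable; the Vandermonde version buys elementarity and a sharper hypothesis, while the paper's duality argument is shorter to state given that $V^\#$ separating points is taken as known.
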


\begin{proof}
The result for the case $V=\R$ follows from the fact that if the function $f\colon\R^+\to\R$ defined by $f(\lambda)=v_0+\lambda v_1+\dots+\lambda^nv_n$ is identically zero then for all of the higher order right derivatives $\partial^{(k)}_+f$, we also have $\partial_+^{(k)}f(\lambda)=0\ (\lambda\in\R^+)$. Next suppose $V$ is an arbitrary real vector space. Let $v_0,v_1,\dots,v_n\in V$, and suppose $v_0+\lambda v_1+\dots+\lambda^nv_n=0$ for every $\lambda\in\R^+$. Then for all $\phi\in V^\#$ we have $\phi(v_0)+\lambda\phi(v_1)+\dots+\lambda^n\phi(v_n)=0\quad (\lambda\in\R^+)$. But since the given result holds for $V=\R$ we have $\phi(v_0)=\dots=\phi(v_n)=0$ for all $\phi\in V^\#$. The result now follows from the fact that $V^\#$ separates the points of $V$.
\end{proof}

\begin{lemma}\label{L:|z|}
Let $E$ be a uniformly complete Archimedean vector lattice, let $f,g\in E$, and set $z=f+ig\in E_\C$.
\begin{itemize}
\item[(i)] If $n\in\mathbb{N}$ is even and $m=\frac{n}{2}$ then
\[
|z|=\mathfrak{S}_{n}\left\{\mathfrak{G}_n\left(f^{n}\right),\mathfrak{G}_n\left(mf^{n-2}g^2\right),\mathfrak{G}_n\left(\binom{m}{2}f^{n-4}g^4\right),\mathfrak{G}_n\left(\binom{m}{3}f^{n-6}g^6\right),\dots,\mathfrak{G}_n\left(g^n\right)\right\}.
\]
\item[(ii)] If $n\in\mathbb{N}$ is odd and $m=\frac{n-1}{2}$ then
\[
|z|=\mathfrak{S}_{n}\left\{\mathfrak{G}_n\left(f^{n-1}|z|\right),\mathfrak{G}_n\left(mf^{n-3}g^2|z|\right),\mathfrak{G}_n\left(\binom{m}{2}f^{n-5}g^4|z|\right),\dots,\mathfrak{G}_n\left(g^n|z|\right)\right\}.
\]
\end{itemize}
\end{lemma}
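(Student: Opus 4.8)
The plan is to reduce both formulas to elementary identities of one-variable functions by means of the Archimedean functional calculus. Recall that on a uniformly complete Archimedean vector lattice $E$ one associates to each $k\in\N$, each continuous positively homogeneous function $\Phi\colon\R^k\to\R$, and each $k$-tuple $f_1,\dots,f_k\in E$ a well-defined element $\Phi(f_1,\dots,f_k)\in E$; this assignment is a vector lattice and algebra homomorphism in $\Phi$, it is compatible with composition of continuous positively homogeneous functions, and, crucially, it depends on $\Phi$ only through its values on $\R^k$ (see \cite{BusdPvR,BusSch}). The root mean power, the geometric mean, and the modulus on $E_\C$ all arise this way: $\mathfrak{S}_n(x_1,\dots,x_r)$, $\mathfrak{G}_n(x_1,\dots,x_n)$, and $|f+ig|$ are, respectively, the functional calculus applied to the continuous positively homogeneous functions $(a_1,\dots,a_r)\mapsto\bigl(\sum_j|a_j|^n\bigr)^{1/n}$, $(a_1,\dots,a_n)\mapsto\bigl(\prod_j|a_j|\bigr)^{1/n}$, and $(a,b)\mapsto\sqrt{a^2+b^2}$, the last being consistent with the supremum formula for $|z|$ in Section~\ref{S:intro}. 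It therefore suffices to write each side as the functional calculus of an explicit function of $f$ and $g$ and to verify the resulting identity on $\R$.

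For part (i) I would use the scaling rule $\mathfrak{G}_n(\alpha a^{n-2k}b^{2k})=\alpha^{1/n}\mathfrak{G}_n(a^{n-2k}b^{2k})$ together with $\mathfrak{G}_n(a^{n-2k}b^{2k})=|a|^{(n-2k)/n}|b|^{2k/n}$ to see that the right-hand side equals $\Phi(f,g)$ for the continuous positively homogeneous function
\[
\Phi(a,b)=\left(\sum_{k=0}^{m}\binom{m}{k}\,|a|^{n-2k}|b|^{2k}\right)^{1/n}\qquad(a,b\in\R).
\]
Since $n=2m$, the binomial theorem gives $\Phi(a,b)=\bigl((a^2+b^2)^m\bigr)^{1/n}=(a^2+b^2)^{1/2}=|a+ib|$ for all real $a,b$, so the functional calculus yields $\Phi(f,g)=|z|$, which is the assertion.

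For part (ii) the analogous computation shows that the right-hand side equals $\Psi(f,g,|z|)$, where
\[
\Psi(a,b,c)=\left(|c|\sum_{k=0}^{m}\binom{m}{k}\,|a|^{n-1-2k}|b|^{2k}\right)^{1/n}=|c|^{1/n}(a^2+b^2)^{m/n}.
\]
Here $|z|$ occurs among the entries of the geometric means, so it must not be treated as an independent variable; instead I would use that $|z|$ is the functional calculus of $(a,b)\mapsto\sqrt{a^2+b^2}$ applied to $(f,g)$, so that by the composition property $\Psi(f,g,|z|)$ coincides with the functional calculus of $(a,b)\mapsto\Psi\!\left(a,b,\sqrt{a^2+b^2}\right)$ applied to $(f,g)$. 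Because $n=2m+1$,
\[
\Psi\!\left(a,b,\sqrt{a^2+b^2}\right)=(a^2+b^2)^{1/(2n)}(a^2+b^2)^{m/n}=(a^2+b^2)^{(2m+1)/(2n)}=(a^2+b^2)^{1/2},
\]
whence the right-hand side equals $\sqrt{f^2+g^2}=|z|$.

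The one step I expect to require care is this substitution of $|z|$ into the geometric means in part (ii): the equality cannot be read off from a pointwise comparison of two functions of three free variables, so the composition law of the functional calculus has to be invoked as above. Everything else is bookkeeping with the binomial theorem, together with the routine check that each function appearing along the way is continuous and positively homogeneous of degree one, so that the functional calculus indeed applies at every stage.
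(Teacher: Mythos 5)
Your proof is correct, and it is the same reduction-to-scalars idea as the paper's, but executed with different machinery. You treat the functional calculus as a black box with two properties: it depends on the positively homogeneous function only through its values, and it respects composition, the latter being exactly what you need (and correctly flag) to substitute $|z|=\mathfrak{S}_2(f,g)$ into the third slot of the geometric means in part (ii), and also, implicitly, to view $\mathfrak{S}_n$ of the various $\mathfrak{G}_n$-expressions as a single functional-calculus expression in $(f,g)$ in part (i). The paper instead avoids invoking any general composition theorem: it forms the vector sublattice $F$ generated by the finitely many elements involved, applies an arbitrary real-valued vector lattice homomorphism $\w\colon F\to\R$, uses that $\w$ commutes with $\mathfrak{S}_2$, $\mathfrak{S}_n$ and $\mathfrak{G}_n$ (\cite[Theorems 3.7 and 3.11]{BusSch}), carries out the same binomial-theorem computation in $\R$ (including $\w(|z|)=(\w(f)^2+\w(g)^2)^{1/2}$, which handles your delicate $|z|$-substitution pointwise), and concludes because such homomorphisms separate the points of $F$ (\cite{BusvR3}). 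So the two arguments differ only in where the work is placed: your version is shorter provided a composition law for the Archimedean functional calculus is actually available in the sources you cite; if it is not stated there in the generality you need, the honest fix is to prove that instance of it, and the standard proof is precisely the homomorphism-plus-separation argument the paper runs, at which point the two proofs coincide. Your scalar computations themselves (the binomial identity in (i), and $(a^2+b^2)^{1/(2n)}(a^2+b^2)^{m/n}=(a^2+b^2)^{1/2}$ for $n=2m+1$ in (ii)) match the paper's and are correct.
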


\begin{proof}
We only prove (1), leaving the similar proof of (2) to the reader. To this end, let $F$ be the vector sublattice of $E$ generated by
\[
f,g,\mathfrak{G}_n\left(f^{n}\right),\mathfrak{G}_n\left(f^{n-2}g^2\right),\mathfrak{G}_n\left(f^{n-4}g^4\right),\mathfrak{G}_n\left(f^{n-6}g^6\right),\dots,\mathfrak{G}_n\left(g^n\right),\ \text{and}
\]
\[
\mathfrak{S}_{n}\left\{\mathfrak{G}_n\left(f^{n}\right),\mathfrak{G}_n\left(mf^{n-2}g^2\right),\mathfrak{G}_n\left(\binom{m}{2}f^{n-4}g^4\right),\mathfrak{G}_n\left(\binom{m}{3}f^{n-6}g^6\right),\dots,\mathfrak{G}_n\left(g^n\right)\right\}.
\]
Let $\w:F\rightarrow\R$ be a vector lattice homomorphism. Using \cite[Theorem 3.7(1)]{BusSch} (second equality) and \cite[Theorem 3.11]{BusSch} (third equality), we have
\begin{align*}\label{eq:1}
\w(|z|)&=\w\left(\underset{\theta\in[0,2\pi]}{\sup}\{fcos\theta+g\sin\theta\}\right)=\w\left(\mathfrak{S}_2(f,g)\right)=\mathfrak{S}_2\bigl(\w(f),\w(g)\bigr)\\
&=\bigl(\w(f)^2+\w(g)^2\bigr)^\frac{1}{2}.
\end{align*}
Therefore,
\begin{equation}\label{eq:1}
\w(|z|)=\big((\w(f)^2+\w(g)^2)^{m}\big)^{\frac{1}{n}}.
\end{equation}
Note that
\begin{align*}
(\w(f)^2+\w(g)^2)^{m}&=\sum_{k=0}^{m}\binom{m}{k}\w(f)^{n-2k}\w(g)^{2k}=\sum_{k=0}^{m}\binom{m}{k}\big(\w(f)^{\frac{n-2k}{n}}\w(g)^{\frac{2k}{n}}\big)^n\\
&=\sum_{k=0}^{m}\binom{m}{k}\mathfrak{G}_n(\w(f)^{n-2k}\w(g)^{2k})^n=\sum_{k=0}^{m}\mathfrak{G}_n\left(\binom{m}{k}\w(f)^{n-2k}\w(g)^{2k}\right)^n.
\end{align*}
Using equation (\ref{eq:1}) we have
\begin{align*}
\w(|z|)&=\left(\sum_{k=0}^{m}\mathfrak{G}_n\left(\binom{m}{k}\w(f)^{n-2k}\w(g)^{2k}\right)^n\right)^{\frac{1}{n}}\\
&=\mathfrak{S}_{n}\left\{\mathfrak{G}_n\bigl(\w(f)^{n}\bigr),\mathfrak{G}_n\bigl(m\w(f)^{n-2}\w(g)^2\bigr),\mathfrak{G}_n\left(\binom{m}{2}\w(f)^{n-4}\w(g)^4\right),\dots,\mathfrak{G}_n\bigl(\w(g)^n\bigr)\right\}\\
&=\mathfrak{S}_{n}\left\{\w\bigl(\mathfrak{G}_n(f^{n})\bigr),\w\bigl(\mathfrak{G}_n(mf^{n-2}g^2)\bigr),\omega\left(\mathfrak{G}_n\left(\binom{m}{2}\w(f)^{n-4}g^4\right)\right),\dots,\w\bigl(\mathfrak{G}_n(g)^n)\bigr)\right\}\\
&=\w\left(\mathfrak{S}_{n}\left\{\mathfrak{G}_n\left(f^{n}\right),\mathfrak{G}_n\left(mf^{n-2}g^2\right),\mathfrak{G}_n\left(\binom{m}{2}f^{n-4}g^4\right),\dots,\mathfrak{G}_n\left(g^n\right)\right\}\right).
\end{align*}
Since the set of all nonzero vector lattice homomorphisms from $F$ into $\mathbb{R}$ separates the points of $F$ (see \cite[(ii) on page 526 and Theorem 2.2]{BusvR3}), we have
\[
|z|=\mathfrak{S}_{n}\left\{\mathfrak{G}_n\left(f^{n}\right),\mathfrak{G}_n\left(mf^{n-2}g^2\right),\mathfrak{G}_n\left(\binom{m}{2}f^{n-4}g^4\right),\mathfrak{G}_n\left(\binom{m}{3}f^{n-6}g^6\right),\dots,\mathfrak{G}_n\left(g^n\right)\right\}.
\]
\end{proof}

We proceed to our first of two main results. The implication (iv)$\implies$(i) of the following theorem is contained in \cite[Proposition 4.38]{Loane2}. Since \cite{Loane2} is not readily available, we record the details of the proof.

\begin{theorem}\label{T:even}
Let $E$ be a uniformly complete vector lattice, let $Y$ be a convex bornological space, let $n,r\in\mathbb{N}$ with $n$ even, let $P$ be a bounded $n$-homogeneous polynomial, and let $\check{P}$ be the unique symmetric $n$-linear map associated with $P$. The following are equivalent.
\begin{itemize}
\item[(i)] $P$ is orthogonally additive.
\item[(ii)] For $f_{1},\dots,f_{\max\{n,r\}}\in E^{+}$, the following equalities hold:
\[
P\bigl(\mathfrak{S}_{n}(f_1,\dots,f_r)\bigr)=P(f_1)+\cdots+P(f_r);
\]
\[
P\bigl(\mathfrak{G}_n(f_1,\dots,f_n)\bigr)=\check{P}(f_1,\dots,f_n).
\]
\item[(iii)] $P(|z|)=\check{P}_{\C}(z^{\frac{n}{2}}(\bar{z})^{\frac{n}{2}})$ for every $z\in E_{\C}$.
\item[(iv)] $\check{P}(f^{n-k},g^{k})=0$ for every $k\in\lbrace 1,\dots,n-1\rbrace$ whenever $f\perp g$.
\end{itemize}
\end{theorem}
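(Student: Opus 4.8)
I would prove the chain of implications (i)$\Rightarrow$(ii)$\Rightarrow$(iii)$\Rightarrow$(iv)$\Rightarrow$(i). Here (i)$\Rightarrow$(ii) is precisely Kusraeva's theorem recalled in the introduction \cite{Kusa}, and (iv)$\Rightarrow$(i) is the short computation of \cite[Proposition 4.38]{Loane2}: for $f\perp g$, $P(f+g)=\check P\bigl((f+g)^{n}\bigr)=\sum_{j=0}^{n}\binom{n}{j}\check P(f^{n-j}g^{j})$, and (iv) annihilates the terms with $1\le j\le n-1$, leaving $P(f)+P(g)$. Thus the substance is in (ii)$\Rightarrow$(iii) and (iii)$\Rightarrow$(iv). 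One computation recurs in both: writing $m=\tfrac{n}{2}$ and expanding $\check P_\C\bigl(z^{m}(\bar z)^{m}\bigr)=\check P_\C\bigl((f+ig)^{m}(f-ig)^{m}\bigr)$ by $\C$-multilinearity, the coefficient attached to $\check P(f^{n-s}g^{s})$ is the coefficient of $x^{s}$ in $(1+ix)^{m}(1-ix)^{m}=(1+x^{2})^{m}$, that is $\binom{m}{s/2}$ if $s$ is even and $0$ if $s$ is odd; hence $\check P_\C\bigl(z^{m}(\bar z)^{m}\bigr)=\sum_{k=0}^{m}\binom{m}{k}\check P(f^{n-2k}g^{2k})$ (using that $\check P_\C$ restricts to $\check P$ on real arguments), and likewise $\check P_\C\bigl((u+itv)^{m}(u-itv)^{m}\bigr)=\sum_{k=0}^{m}\binom{m}{k}t^{2k}\check P(u^{n-2k}v^{2k})$.

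For (ii)$\Rightarrow$(iii) I would first note that (ii) forces $\check P$ to vanish on every positive $n$-tuple containing a disjoint pair: if $h_{1},\dots,h_{n}\in E^{+}$ and $h_{i}\wedge h_{j}=0$, then $\mathfrak{G}_{n}(h_{1},\dots,h_{n})=0$ (at each real lattice homomorphism $\w$ its value is $(\prod_{k}\w(h_{k}))^{1/n}=0$ because $\w(h_{i})\w(h_{j})=0$), so $\check P(h_{1},\dots,h_{n})=P(0)=0$ by the second equality in (ii). Expanding $f=f^{+}-f^{-}$ and $g=g^{+}-g^{-}$ and deleting every term in which $f^{+}$ and $f^{-}$, or $g^{+}$ and $g^{-}$, both appear, and noting that $n-2k$ and $2k$ are even so the four surviving terms carry coefficient $+1$ in either expansion, gives $\check P(f^{n-2k}g^{2k})=\check P(|f|^{n-2k}|g|^{2k})$. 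Now apply $P$ to the expression for $|z|$ in Lemma~\ref{L:|z|}(i): the first equality in (ii) distributes $P$ across the outer $\mathfrak{S}_{n}$, the scalar rule for $\mathfrak{G}_{n}$ and $n$-homogeneity of $P$ extract the $\binom{m}{k}$, the second equality in (ii) turns each summand into $\check P(|f|^{n-2k}|g|^{2k})$, and the preceding identity rewrites this as $\check P(f^{n-2k}g^{2k})$; so $P(|z|)=\sum_{k}\binom{m}{k}\check P(f^{n-2k}g^{2k})=\check P_\C\bigl(z^{m}(\bar z)^{m}\bigr)$, which is (iii).

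For (iii)$\Rightarrow$(iv) I would feed parametrised elements of $E_\C$ into (iii) and read off coefficients with Lemma~\ref{L:P=0}. Taking $z=h$ real gives $P(|h|)=\check P(h^{n})=P(h)$. For $u,v\in E^{+}$ with $u\perp v$ and $t\in\R^{+}$, one checks through homomorphisms that $|u+itv|=u+tv$, so (iii) becomes $\sum_{j}\binom{n}{j}t^{j}\check P(u^{n-j}v^{j})=\sum_{k}\binom{m}{k}t^{2k}\check P(u^{n-2k}v^{2k})$ for all $t\in\R^{+}$; by Lemma~\ref{L:P=0} the odd-degree coefficients give $\check P(u^{n-j}v^{j})=0$ and the degree $2k$ coefficient gives $\bigl(\binom{n}{2k}-\binom{m}{k}\bigr)\check P(u^{n-2k}v^{2k})=0$, whence $\check P(u^{n-2k}v^{2k})=0$ too, since $\binom{2m}{2k}\ge\binom{m}{k}^{2}>\binom{m}{k}$ for $1\le k\le m-1$. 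This proves (iv) for positive disjoint pairs. For arbitrary $f\perp g$, take $z=f+tg$ (real, with $|z|=|f|+t|g|$ because $f\perp g$) in (iii); matching coefficients in $t\in\R^{+}$ via Lemma~\ref{L:P=0} yields $\check P(f^{n-j}g^{j})=\check P(|f|^{n-j}|g|^{j})$ for every $j$, and the positive case applied to $|f|\perp|g|$ then gives (iv).

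The points needing care are: the two modulus identities $|u+itv|=u+tv$ and $|f+tg|=|f|+t|g|$ for disjoint arguments, which follow from the same passage to real-valued lattice homomorphisms used in Lemma~\ref{L:|z|} together with $\w(x)\w(y)=0$ whenever $x\perp y$; the reconciliation, inside (ii)$\Rightarrow$(iii), of the absolute-value data produced by $\mathfrak{G}_{n}$ with the signed data produced by $\check P_\C$, which goes through only because $n$ is even; and the strict inequality $\binom{2m}{2k}>\binom{m}{k}$, which is exactly what converts the even-degree coefficient relations in (iii)$\Rightarrow$(iv) into vanishing. I expect the main nuisance to be the binomial bookkeeping — propagating weights through $\mathfrak{S}_{n}$, $\mathfrak{G}_{n}$, homogeneity of $P$, the $f^{\pm}/g^{\pm}$ expansions, and the $(1+x^{2})^{m}$ identity — which is routine but easy to get wrong.
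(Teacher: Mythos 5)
Your proposal is correct and follows the same overall route as the paper: the chain (i)$\Rightarrow$(ii)$\Rightarrow$(iii)$\Rightarrow$(iv)$\Rightarrow$(i), with Kusraeva's theorem for the first implication, Lemma~\ref{L:|z|}(i) together with the two formulas in (ii) for the second, a comparison of binomial expansions plus Lemma~\ref{L:P=0} for the third, and the expansion of $\check P\bigl((f+g)^n\bigr)$ for the last. Your execution differs in two local but worthwhile ways. In (ii)$\Rightarrow$(iii) you first deduce from (ii) that $\check P$ vanishes on positive $n$-tuples containing a disjoint pair, hence $\check P(f^{n-2k}g^{2k})=\check P(|f|^{n-2k}|g|^{2k})$; the paper applies the geometric-mean formula of (ii) directly to the (not necessarily positive) real and imaginary parts of $z$, so your extra step supplies a justification the paper leaves implicit, since the functional-calculus $\mathfrak{G}_n$ only sees $|f|$ and $|g|$ while (ii) is stated for positive entries. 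In (iii)$\Rightarrow$(iv) the paper compares $P(|f+g|)$, $P(|f-g|)$ and $P(|f+ig|)$ and then scales $g$ by $\lambda$, whereas you substitute the one-parameter family $z=u+itv$ once and read off all coefficients; the underlying computation is identical (odd coefficients vanish outright, even ones because $\binom{n}{2k}>\binom{n/2}{k}$ for $1\le k\le \frac{n}{2}-1$). Finally, your passage from positive disjoint pairs to arbitrary ones, via $z=f+tg$, the identity $|f+tg|=|f|+t|g|$, and Lemma~\ref{L:P=0}, is cleaner than the paper's terse appeal to $f=f^+-f^-$ and multilinearity, which as stated still leaves mixed terms of the form $\check P\bigl((f^+)^a(f^-)^b(g^+)^c(g^-)^d\bigr)$ to be handled by a further (easy but unstated) scaling argument; your version avoids such terms entirely.
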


\begin{proof}
(i)$\implies$(ii) This is the content of the main theorem of \cite{Kusa}.

\medskip
\noindent
(ii)$\implies$(iii) Suppose that for all $f_{1},\dots,f_{\max\{n,r\}}\in E^{+}$, we have
\[
P\bigl(\mathfrak{S}_{n}(f_1,\dots,f_r)\bigr)=P(f_1)+\cdots+P(f_r),\ \text{and}
\]
\[
P\bigl(\mathfrak{G}_n(f_1,\dots,f_n)\bigr)=\check{P}(f_1,\dots,f_n).
\]
Let $z=f+ig\in E_{\C}$. For convenience, we write $m=\frac{n}{2}$. Using Lemma~\ref{L:|z|}(i), we obtain
\begin{align*}
P(|z|)&=P\left(\mathfrak{S}_{n}\left\{\mathfrak{G}_n\left(f^{n}\right),\mathfrak{G}_n\left(mf^{n-2}g^2\right),\mathfrak{G}_n\left(\binom{m}{2}f^{n-4}g^4\right),\dots,\mathfrak{G}_n\left(g^n\right)\right\}\right)\\
&=\sum_{k=0}^{m}P\left(\mathfrak{G}_n\left(\binom{m}{k}f^{n-2k}g^{2k}\right)\right)=\sum_{k=0}^{m}P\left(\binom{m}{k}^{\frac{1}{n}}\mathfrak{G}_n\left(f^{n-2k}g^{2k}\right)\right)\\
&=\sum_{k=0}^{m}\binom{m}{k}P\left(\mathfrak{G}_n\left(f^{n-2k}g^{2k}\right)\right)=\sum_{k=0}^{m}\binom{m}{k}\check{P}\left(f^{n-2k}g^{2k}\right)\\
&=\check{P}_{\C}(z^{m}(\bar{z})^m)=\check{P}_{\C}(z^{\frac{n}{2}}(\bar{z})^{\frac{n}{2}}).
\end{align*}

\medskip
\noindent
(iii)$\implies$(iv) Suppose that $P(|z|)=\check{P}_{\C}(z^{\frac{n}{2}}\bar{z}^{\frac{n}{2}})$ for every $z\in E_{\C}$. Let $f,g\in E^{+}$, and assume $f\perp g$. By \cite[Theorem 14.1(i)]{LuxZan1}, we have $f+g=|f+g|=|f-g|$. Moreover, the string
\[
f+g=||f|-|g||\leq|f+ig|\leq f+g
\]
implies that $|f+ig|=f+g$. We have
\begin{align*}
P(|f+g|)&=\check{P}\bigl((f+g)^n\bigr)\\
&=\sum\limits_{k=0}^{n}\binom{n}{k}\check{P}(f^{n-k}g^{k}),
\end{align*}
while by our assumption,
\begin{align*}
P(|f-g|)&=\check{P}\bigl((f-g)^n\bigr)\\
&=\sum\limits_{k=0}^{n}\binom{n}{k}(-1)^{k}\check{P}(f^{n-k}g^{k}).
\end{align*}
Therefore,
\begin{align*}
\sum\limits_{k=0}^{n}\binom{n}{k}\check{P}(f^{n-k}g^{k})=\sum\limits_{k=0}^{n}\binom{n}{k}(-1)^{k}\check{P}(f^{n-k}g^{k}),
\end{align*}
and thus
\begin{align*}
\sum\limits_{k=0}^{n}(1-(-1)^{k})\binom{n}{k}\check{P}(f^{n-k}g^{k})=2\sum\limits_{k=0}^{\frac{n-2}{2}}\binom{n}{2k+1}\check{P}(f^{n-(2k+1)}g^{2k+1})=0.
\end{align*}
But then for every $\lambda\in\R^{+}$ we have
\begin{align*}
\sum\limits_{k=0}^{\frac{n-2}{2}}\binom{n}{2k+1}\lambda^{2k+1}\check{P}(f^{n-(2k+1)}g^{2k+1})=0.
\end{align*}
It follows from Lemma~\ref{L:P=0} that $\check{P}(f^{n-(2k+1)}g^{2k+1})=0$ for every $k\in\lbrace 0,\dots,\frac{n-2}{2}\rbrace$. Hence,
\begin{align*}
\check{P}\bigl((f+g)^n\bigr)&=\sum\limits_{k=0}^{n}\binom{n}{k}\check{P}(f^{n-k},g^{k})=\sum\limits_{k=0}^{\frac{n}{2}}\binom{n}{2k}\check{P}(f^{n-2k}g^{2k}).
\end{align*}

On the other hand, $|f+ig|=f+g$ implies that
\begin{align*}
\check{P}(|f+ig|^n)=\sum\limits_{k=0}^{\frac{n}{2}}\binom{n}{2k}\check{P}(f^{n-2k}g^{2k}).
\end{align*}
Moreover, by assumption, we have
\begin{align*}
\check{P}(|f+ig|^n)=\check{P}_{\C}\bigl((f+ig)^{\frac{n}{2}}(f-ig)^{\frac{n}{2}}\bigr),
\end{align*}
and the multi-binomial theorem yields
\begin{align*}
\check{P}_{\C}\bigl((f+ig)^{\frac{n}{2}}(f-ig)^{\frac{n}{2}}\bigr)=\sum\limits_{k=0}^{\frac{n}{2}}\binom{\frac{n}{2}}{k}\check{P}(f^{n-2k}g^{2k}).
\end{align*}
Therefore, we obtain
\begin{align*}
\sum\limits_{k=0}^{\frac{n}{2}}\binom{n}{2k}\check{P}(f^{n-2k}g^{2k})=\sum\limits_{k=0}^{\frac{n}{2}}\binom{\frac{n}{2}}{k}\check{P}(f^{n-2k}g^{2k}).
\end{align*}
In fact, for every $\lambda\in\R^{+}$ we have
\begin{align*}
\sum\limits_{k=0}^{\frac{n}{2}}\left(\binom{n}{2k}-\binom{\frac{n}{2}}{k}\right)\lambda^{2k}\check{P}(f^{n-2k}g^{2k})=0.
\end{align*}
It follows from Lemma~\ref{L:P=0} that $\check{P}(f^{n-2k}g^{2k})=0$ for every $k\in\lbrace 1,\dots,\frac{n}{2}\rbrace$. We conclude that $\check{P}(f^{n-k}g^{k})=0$ for each $k\in\lbrace 1,\dots,n-1\rbrace$. The result now follows from the $n$-linearity of $\check{P}$ and the decomposition $f=f^+-f^-$ for any $f\in E$.

\medskip
\noindent
(iv)$\implies$(i) Suppose that $\check{P}(f^{n-k}g^{k})=0$ for every $k\in\lbrace 1,\dots,n-1\rbrace$ whenever $f$ and $g$ are disjoint. Let $f,g\in E$, and assume $f\perp g$. Then
\begin{align*}
P(f+g)&=P(f)+P(g)+\sum\limits_{k=1}^{n-1}\binom{n}{k}\check{P}(f^{n-k}g^{k})\\
&=P(f)+P(g).
\end{align*}
Hence $P$ is orthogonally additive.
\end{proof}

The following theorem is an analogue of Theorem~\ref{T:even} for odd $n\in\N$.

\begin{theorem}\label{T:odd}
Let $E$ be a uniformly complete vector lattice, let $Y$ be a convex bornological space, let $n,r\in\mathbb{N}$ with $n$ odd, let $P$ be a bounded $n$-homogeneous polynomial, and let $\check{P}$ be the unique symmetric $n$-linear map associated with $P$. The following are equivalent.
\begin{itemize}
\item[(i)] $P$ is orthogonally additive.
\item[(ii)] For $f_{1},\dots,f_{\max\{n,r\}}\in E^{+}$, the following equalities hold:
\[
P\bigl(\mathfrak{S}_{n}(f_1,\dots,f_r)\bigr)=P(f_1)+\cdots+P(f_r);
\]
\[
P\bigl(\mathfrak{G}_n(f_1,\dots,f_n)\bigr)=\check{P}(f_1,\dots,f_n).
\]
\item[(iii)] $P(|z|)=\check{P}_{\C}(z^{\frac{n-1}{2}}(\bar{z})^{\frac{n-1}{2}}|z|)$ for every $z\in E_{\C}$.
\item[(iv)] $\check{P}(f^{n-k}g^{k})=0$ for every $k\in\lbrace 1,\dots,n-1\rbrace$ whenever $f\perp g$.
\end{itemize}
\end{theorem}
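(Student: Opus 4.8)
The plan is to run the cycle of implications (i)$\implies$(ii)$\implies$(iii)$\implies$(iv)$\implies$(i), reusing the architecture of the proof of Theorem~\ref{T:even} but feeding it the odd halves of Lemma~\ref{L:|z|} and Lemma~\ref{L:P=0}. For (i)$\implies$(ii) I would cite Kusraeva's theorem \cite{Kusa}, exactly as in the even case. For (ii)$\implies$(iii), fix $z=f+ig\in E_\C$, set $m=\frac{n-1}{2}$, substitute the $\mathfrak{S}_{n}$-representation of $|z|$ from Lemma~\ref{L:|z|}(ii) into the first identity of~(ii), extract the binomial constants from the geometric means using the scaling rule $\mathfrak{G}_n(\alpha f^{\,n-1-2k}g^{2k}|z|)=\alpha^{1/n}\mathfrak{G}_n(f^{\,n-1-2k}g^{2k}|z|)$ together with the $n$-homogeneity of $P$, and then apply the second identity of~(ii); this should yield
\[
P(|z|)=\sum_{k=0}^{m}\binom{m}{k}\check{P}\bigl(f^{\,n-1-2k}g^{2k}|z|\bigr).
\]
Since $z\bar z=f^{2}+g^{2}$ and $\check{P}_\C$ is symmetric and $\mathbb{C}$-multilinear, the multi-binomial theorem identifies the right-hand side with $\check{P}_\C\bigl(z^{m}(\bar z)^{m}|z|\bigr)=\check{P}_\C\bigl(z^{\frac{n-1}{2}}(\bar z)^{\frac{n-1}{2}}|z|\bigr)$, which is~(iii). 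This is the verbatim odd counterpart of (ii)$\implies$(iii) in Theorem~\ref{T:even}, now carrying the extra entry $|z|$ throughout.

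The implication (iii)$\implies$(iv) is the heart of the argument, and it is here that odd $n$ turns out to be slightly easier than even $n$. Assuming~(iii), I would take $f,g\in E^{+}$ with $f\perp g$ and, for a parameter $\lambda\in\R^{+}$, apply~(iii) to the \emph{real} element $z=f-\lambda g$. As in Theorem~\ref{T:even} one has $\lambda g\perp f$ and $|f-\lambda g|=f+\lambda g$, so the left side of~(iii) equals $P(f+\lambda g)=\check{P}\bigl((f+\lambda g)^{n}\bigr)$, while $\bar z=z$ and $|z|=f+\lambda g$ turn the right side into $\check{P}\bigl((f-\lambda g)^{n-1}(f+\lambda g)\bigr)$. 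Writing $a_{k}=\check{P}(f^{\,n-k}g^{k})$ and expanding both sides by the binomial theorem produces two polynomials in $\lambda$ that agree on $\R^{+}$; Lemma~\ref{L:P=0} then forces, for each $k$,
\[
\Bigl(\binom{n}{k}-(-1)^{k}\binom{n-1}{k}+(-1)^{k}\binom{n-1}{k-1}\Bigr)a_{k}=0 .
\]
Pascal's rule rewrites the scalar multiplying $a_{k}$ as $2\binom{n-1}{k-1}$ when $k$ is even and as $2\binom{n-1}{k}$ when $k$ is odd; since $n$ is odd, this scalar vanishes exactly at $k=0$ and $k=n$ and is strictly positive for every $k\in\{1,\dots,n-1\}$, so $\check{P}(f^{\,n-k}g^{k})=0$ for all such $k$ and all disjoint $f,g\in E^{+}$. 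Worth noting: the single family $z=f-\lambda g$ already does everything here, because the trailing factor $f+\lambda g$ inherited from $|z|$ destroys the parity obstruction that, in Theorem~\ref{T:even}, obliges one to also test against $z=f+ig$. To complete~(iv) I would discard positivity: if $f\perp g$ in $E$ then $f^{+},f^{-},g^{+},g^{-}$ are pairwise disjoint, and a scaling argument together with Lemma~\ref{L:P=0} applied one scalar variable at a time shows that $\check{P}$ annihilates every $n$-fold product of these four elements in which at least two distinct ones occur; the multilinearity of $\check{P}$ then gives $\check{P}(f^{\,n-k}g^{k})=0$ for all disjoint $f,g\in E$ and all $k\in\{1,\dots,n-1\}$.

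Finally, (iv)$\implies$(i) is immediate and identical to the even case: for $f\perp g$ in $E$ every mixed term of $\check{P}\bigl((f+g)^{n}\bigr)=\sum_{k=0}^{n}\binom{n}{k}\check{P}(f^{\,n-k}g^{k})$ is killed by~(iv), leaving $P(f+g)=P(f)+P(g)$, so $P$ is orthogonally additive. The step I expect to demand the most care is (iii)$\implies$(iv): pinning down the binomial identity above, verifying that its coefficient of $a_{k}$ never vanishes on $\{1,\dots,n-1\}$, and carrying out the reduction from $E^{+}$ to $E$ cleanly; everything else should transfer mechanically from the proof of Theorem~\ref{T:even} (the case $n=1$ being trivial, as condition~(iv) is then vacuous).
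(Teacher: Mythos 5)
Your proposal is correct and follows essentially the same route as the paper: Kusraeva's theorem for (i)$\implies$(ii), Lemma~\ref{L:|z|}(ii) plus the scaling rule and the multi-binomial identity for (ii)$\implies$(iii), the real test element $f-\lambda g$ with $|f-\lambda g|=f+\lambda g$, Pascal's rule, and Lemma~\ref{L:P=0} for (iii)$\implies$(iv) (the paper takes $\lambda=1$ first and then rescales $g$, which is equivalent and yields the same coefficients $2\binom{n-1}{\psi(k)}$), and the binomial expansion of $\check{P}\bigl((f+g)^n\bigr)$ for (iv)$\implies$(i). Your explicit reduction from positive to arbitrary disjoint $f,g$ is only spelled out more fully than in the paper, which leaves it to $n$-linearity and $f=f^+-f^-$.
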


\begin{proof}
(i)$\implies$(ii) See the main theorem of \cite{Kusa}.

\medskip
\noindent
(ii)$\implies$(iii) Let $z=f+ig\in E_{\C}$. For convenience, we write $m=\frac{n-1}{2}$. Using Lemma~\ref{L:|z|}(ii), we have
\begin{align*}
P(|z|)&=P\left(\mathfrak{S}_{n}\left\{\mathfrak{G}_n\left(f^{n-1}|z|\right),\mathfrak{G}_n\left(mf^{n-3}g^2|z|\right),\mathfrak{G}_n\left(\binom{m}{2}f^{n-5}g^4|z|\right),\dots,\mathfrak{G}_n\left(g^n|z|\right)\right\}\right)\\
&=\sum_{k=0}^{m}P\left(\mathfrak{G}_n\left(\binom{m}{k}f^{n-1-2k}g^{2k}|z|\right)\right)=\sum_{k=0}^{m}P\left(\binom{m}{k}^{\frac{1}{n}}\mathfrak{G}_n\left(f^{n-1-2k}g^{2k}|z|\right)\right)\\
&=\sum_{k=0}^{m}\binom{m}{k}P\left(\mathfrak{G}_n\left(f^{n-1-2k}g^{2k}|z|\right)\right)=\sum_{k=0}^{m}\binom{m}{k}\check{P}\left(f^{n-1-2k}g^{2k}|z|\right)\\
&=\check{P}_{\C}(z^{m}(\bar{z})^m|z|)=\check{P}_{\C}(z^{\frac{n-1}{2}}(\bar{z})^{\frac{n-1}{2}}|z|).
\end{align*}

\medskip
\noindent
(iii)$\implies$(iv) Suppose that $P(|z|)=\check{P}_{\C}(z^{\frac{n-1}{2}}(\bar{z})^{\frac{n-1}{2}}|z|)$ for every $z\in E_\C$. Let $f,g\in E^{+}$ be such that $f\perp g$. Then
\begin{align*}
P(|f+g|)&=\check{P}\bigl((f+g)^{n}\bigr)\\
&=\sum\limits_{k=0}^{n}\binom{n}{k}\check{P}(f^{n-k}g^{k}).
\end{align*}
Moreover, by assumption
\begin{align*}
P(|f-g|)&=\check{P}\bigl((f-g)^{n-1}|f-g|\bigr)\\
&=\check{P}\bigl((f-g)^{n-1}(f+g)\bigr)\\
&=\sum\limits_{k=0}^{n-1}\binom{n-1}{k}(-1)^{k}\check{P}(f^{n-1-k}g^{k}(f+g))\\
&=\sum\limits_{k=0}^{n-1}\binom{n-1}{k}(-1)^{k}\check{P}(f^{n-k}g^{k})+\sum\limits_{k=0}^{n-1}\binom{n-1}{k}(-1)^{k}\check{P}(f^{n-(k+1)}g^{k+1})\\
&=\sum\limits_{k=0}^{n-1}\binom{n-1}{k}(-1)^{k}\check{P}(f^{n-k}g^{k})-\sum\limits_{k=1}^{n}\binom{n-1}{k-1}(-1)^{k}\check{P}(f^{n-k}g^{k})\\
&=\check{P}(f^n)+\check{P}(g^n)+\sum\limits_{k=1}^{n-1}\left(\binom{n-1}{k}-\binom{n-1}{k-1}\right)(-1)^{k}\check{P}(f^{n-k}g^{k}).
\end{align*}
Thus
\begin{align*}
\sum\limits_{k=0}^{n}\binom{n}{k}\check{P}(f^{n-k}g^{k})&=\check{P}(f^n)+\check{P}(g^n)+\sum\limits_{k=1}^{n-1}\left(\binom{n-1}{k}-\binom{n-1}{k-1}\right)(-1)^{k}\check{P}(f^{n-k}g^{k}).
\end{align*}
Hence
\begin{align*}
0&=\sum\limits_{k=0}^{n}\binom{n}{k}\check{P}(f^{n-k}g^{k})-\check{P}(f^n)-\check{P}(g^n)-\sum\limits_{k=1}^{n-1}\left(\binom{n-1}{k}-\binom{n-1}{k-1}\right)(-1)^{k}\check{P}(f^{n-k}g^{k})\\
&=\sum\limits_{k=1}^{n-1}\Biggl(\binom{n}{k}-\left(\binom{n-1}{k}-\binom{n-1}{k-1}\right)(-1)^{k}\Biggr)\check{P}(f^{n-k}g^{k})\\
&=2\sum\limits_{k=1}^{n-1}\binom{n-1}{\psi(k)}\check{P}(f^{n-k}g^{k}),
\end{align*}
where
\[
\begin{array}{ccc}
\psi(k)=\begin{cases} k &\mbox{if } k\ \text{is odd}\\
k-1 & \mbox{if } k\ \text{is even} \end{cases}\ (k\in\{1,\dots,n-1\}).
\end{array}
\]
Therefore,
\[
\sum\limits_{k=1}^{n-1}\binom{n-1}{\psi(k)}\check{P}(f^{n-k}g^{k})=0.
\]
But then for every $\lambda\in\R^{+}$ we have
\[
\sum\limits_{k=1}^{n-1}\lambda^{k}\binom{n-1}{\psi(k)}\check{P}(f^{n-k}g^{k})=0.
\]
It follows from Lemma~\ref{L:P=0} that $\check{P}(f^{n-k}g^{k})=0$ for every $k\in\lbrace 1,\dots,n-1\rbrace$. Using the $n$-linearity of $\check{P}$ and the decomposition $f=f^+-f^-$ for any $f\in E$, we obtain the desired result.

\medskip
\noindent
(iv)$\implies$(i) This proof is the same as in Theorem~\ref{T:even}.
\end{proof}

We conclude by pointing out that the implications (ii)$\implies$(iii), (iii)$\implies$(iv), and (iv)$\implies$(i) in Theorems \ref{T:even} and \ref{T:odd} still hold when $Y$ is a general real vector space and when $P$ is not necessarily bounded.

\begin{Acknowledgement}
	This research was partially supported by the Claude Leon Foundation and by the DST-NRF Centre of Excellence in Mathematical and Statistical Sciences (CoE-MaSS) (second author). Opinions expressed and conclusions arrived at are those of the authors and are not necessarily to be attributed to the CoE-MaSS.
\end{Acknowledgement}

\bibliography{cboapvl}

\providecommand{\bysame}{\leavevmode\hbox to3em{\hrulefill}\thinspace}
\providecommand{\MR}{\relax\ifhmode\unskip\space\fi MR }
\providecommand{\MRhref}[2]{%
  \href{http://www.ams.org/mathscinet-getitem?mr=#1}{#2}
}
\providecommand{\href}[2]{#2}
\begin{thebibliography}{10}

\bibitem{AB}
C.D. Aliprantis and O.~Burkinshaw, \emph{Positive {O}perators}, Academic Press,
  Orlando, 1985.

\bibitem{BenLassLlav}
Y.~Benyamini, S.~Lassalle, and J.G. Llavona, \emph{Homogeneous orthogonally
  additive polynomials on {B}anach lattices}, Bull. London Math. Soc.
  \textbf{38} (2006), no.~3, 459--469.

\bibitem{BuBus}
Q.~Bu and G.~Buskes, \emph{Polynomials on {B}anach lattices and positive tensor
  products}, J. Math. Anal. Appl. \textbf{388} (2012), no.~2, 845--862.

\bibitem{BusdPvR}
G.~Buskes, B.~de~Pagter, and A.~van Rooij, \emph{Functional calculus on {R}iesz
  spaces}, Indag. Math. (N.S.) \textbf{2} (1991), no.~4, 423--436.

\bibitem{BusSch2}
G.~Buskes and C.~Schwanke, \emph{Complex vector lattices via functional
  completions}, J. Math. Anal. Appl. \textbf{434} (2016), no.~2, 1762--1778.

\bibitem{BusSch}
\bysame, \emph{Functional completions of {A}rchimedean vector lattices},
  Algebra Universalis \textbf{76} (2016), no.~1, 53--69.

\bibitem{BusvR3}
G.~Buskes and A.~van Rooij, \emph{Small {R}iesz spaces}, Math. Proc. Cambridge
  Philos. Soc. \textbf{105} (1989), no.~3, 523--536.

\bibitem{Kusa}
Z.~A. Kusraeva, \emph{Homogeneous polynomials, power means and geometric means
  in vector lattices}, Vladikavkaz. Mat. Zh. \textbf{16} (2014), no.~4, 49--53.

\bibitem{Loane2}
J.~Loane, \emph{Polynomials on {R}iesz {S}paces}, Ph.D. thesis, Galway, 2007.

\bibitem{LuxZan1}
W.~A.~J. Luxemburg and A.~C. Zaanen, \emph{{R}iesz {S}paces {V}ol. {I}},
  North-Holland Publishing Co., Amsterdam-London; American Elsevier Publishing
  Co., New York, 1971.

\bibitem{Zan2}
A.C. Zaanen, \emph{Riesz {S}paces {II}}, North-Holland Mathematical Library,
  vol.~30, North-Holland Publishing Co., Amsterdam, 1983.

\end{thebibliography}
\bibliographystyle{amsplain}

\end{document}